\newcommand{\eps}{\varepsilon}
\newcommand{\be}{\begin{equation}}
\newcommand{\ba}{\begin{aligned}}
\newcommand{\bee}{\begin{equation*}}
\newcommand{\ee}{\end{equation}}
\newcommand{\ea}{\end{aligned}}
\newcommand{\eee}{\end{equation*}}
\newcommand{\bea}{\begin{equation} \begin{aligned} }
\newcommand{\eea}{\end{aligned}\end{equation} }
\theoremstyle{plain}
\newtheorem{theorem}{Theorem}[section]
\newtheorem{corollary}[theorem]{Corollary}
\newtheorem{prop}[theorem]{Proposition}
\newtheorem{conjecture}[theorem]{Conjecture}
\theoremstyle{remark}
\theoremstyle{definition}
\numberwithin{equation}{section}
\begin{document}
\title{A note on the selfsimilarity of limit flows}
\author{Beomjun Choi, Robert Haslhofer, Or Hershkovits}

\begin{abstract}
It is a fundamental open problem for the mean curvature flow, and in fact for many partial differential equations, whether or not all blowup limits are selfsimilar. In this short note, we prove that for the mean curvature flow of mean convex surfaces all limit flows are selfsimilar (static, shrinking or translating) if and only if there are only finitely many spherical singularities. More generally, using the solution of the mean convex neighborhood conjecture for neck singularities \cite{CHHW}, we establish a local version of this equivalence for neck singularities in arbitrary dimension. In particular, we see that the ancient ovals occur as limit flows if and only if there is a sequence of spherical singularities converging to a neck singularity.
\end{abstract}
\maketitle

\section{Introduction}

Let $M\subset\mathbb{R}^3$ be a smooth closed embedded mean-convex surface. Then there exists a unique evolution $\mathcal{M}=\{M_t\}_{t\geq 0}$ by mean curvature flow, which can be viewed both as level set flow and as unit-regular integral Brakke flow (see e.g. \cite{HK_meanconvex,White_size} for background on mean-convex mean curvature flow).\\

A fundamental open problem for the mean curvature flow, and in fact for many partial differential equations, concerns the selfsimilarity of blowup limits. To describe this, for any $X=(x,t)\in\mathcal{M}$ and $\lambda>0$ we denote by
$\mathcal{M}_{X,\lambda}=\mathcal{D}_{\lambda}(\mathcal{M}-X)$
the flow which is obtained from $\mathcal{M}$ by translating $X$ to the origin and parabolically dilating by $\lambda$. Now, given $X\in\mathcal{M}$, and arbitrary sequences $X_i\to X$ and $\lambda_i\to\infty$, one can always pass to a subsequential limit of $\mathcal{M}_{X_i,\lambda_i}$. Any such limit $\mathcal{M}^\infty$ is called a limit flow at $X$.

\begin{conjecture}[{selfsimilarity of limit flows, see e.g. \cite[Conj. 6]{Ilmanen_problems} and \cite[Conj. 1 and Conj. 3]{White_nature}}]\label{conj_selfsim_limit} 
For the mean curvature flow of a smooth closed embedded mean-convex surface all limit flows are selfsimilar (either static, shrinking or translating).
\end{conjecture}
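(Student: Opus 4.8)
The plan is to turn the conjecture into a classification problem and then to confront the single genuinely non-selfsimilar model. First I would invoke the strong regularity and convexity theory for mean-convex flow (see \cite{HK_meanconvex,White_size}): every limit flow $\mathcal{M}^\infty$ at a point $X\in\mathcal{M}$ is a smooth, weakly convex, noncollapsed \emph{ancient} solution in $\R^3$. The question of whether every such $\mathcal{M}^\infty$ is static, shrinking, or translating then reduces to classifying the ancient convex noncollapsed solutions in $\R^3$ and inspecting each one for selfsimilarity.

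Second, I would apply that classification. By the combined work on ancient convex noncollapsed flows in $\R^3$ --- the round shrinking sphere and cylinder, the static plane, the rotationally symmetric bowl translator (Brendle--Choi), and the compact ancient ovals (Angenent--Daskalopoulos--Sesum) --- the list of possibilities is complete, and every entry on it is selfsimilar \emph{except} the ancient ovals. Hence Conjecture~\ref{conj_selfsim_limit} is equivalent to the single assertion that \emph{the ancient ovals never occur as limit flows of $\mathcal{M}$}, and this is the step on which everything hinges.

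To attack the oval case I would read off the asymptotics of the oval itself: as $t\to0$ it rounds up and vanishes like a shrinking sphere, while as $t\to-\infty$ it elongates and looks, on the appropriate scale, like a round shrinking cylinder. Transported back to the original scale, an oval limit flow at $X$ forces spherical singularities to form on finer and finer scales clustering at $X$, with $X$ itself a neck singularity. I would then try to reach a contradiction through a \emph{gap phenomenon}: by the mean convex neighborhood theorem for neck singularities \cite{CHHW}, the flow near a neck singularity is so rigidly modeled on a round shrinking neck that, one hopes, no spherical singularity can form arbitrarily close to it.

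The main obstacle is exactly this last point, and I expect it to resist a fully general resolution: there is no a priori mechanism forbidding spherical singularities from accumulating at a neck singularity, and such accumulation is precisely what \emph{manufactures} an ancient oval as a limit flow. I would therefore not expect to settle the conjecture outright, but to isolate this as the sharp obstruction by proving instead the equivalence ``all limit flows are selfsimilar $\iff$ there are only finitely many spherical singularities.'' For one direction, finiteness of spherical singularities means that past some time only neck singularities remain, so \cite{CHHW} together with the classification forces every blowup to be selfsimilar; for the converse, a sequence of spherical singularities converging to a neck singularity can be rescaled to produce an ancient oval, contradicting selfsimilarity. In this way the conjecture is reduced to, and seen to be equivalent to, the finiteness of spherical singularities.
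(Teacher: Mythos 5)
You were asked to prove a statement that is, in fact, an open conjecture: the paper itself does not prove Conjecture~\ref{conj_selfsim_limit}, and its actual contribution (Theorem~\ref{thm_equivalence}) is precisely the equivalence you retreat to in your final paragraph. So your overall strategy coincides with the paper's: the reduction via White's structure theory plus the Brendle--Choi and Angenent--Daskalopoulos--Sesum classification to the single question of whether ancient ovals occur as limit flows is the paper's Proposition~\ref{prop3}; your observation that an oval limit flow spawns spherical singularities clustering at the base point is the heart of Proposition~\ref{prop2}; and rescaling a sequence of spherical singularities accumulating at a cylindrical singularity is Proposition~\ref{prop1}. You correctly identified both the sharp obstruction and the right fallback theorem.

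Two points in your sketch deserve correction. First, your stated argument for the direction ``finiteness $\Rightarrow$ selfsimilarity'' --- that past some time only neck singularities remain, so \cite{CHHW} and the classification force selfsimilarity --- does not work as written: the classification of limit flows at a neck singularity \emph{includes} the ancient ovals, so it cannot by itself rule them out, and no ``gap phenomenon'' near neck singularities is available or needed. The paper's Proposition~\ref{prop2} instead runs the contradiction you gesture at in your third paragraph to its conclusion: if ovals occurred as limit flows at $X_i\to X_0$, the spawned spherical singularities $(x_i',t_i')$ converge to $X_0$; finiteness then forces $(x_i',t_i')=(x_0,t_0)$ for large $i$, so the \emph{tangent} flow at $X_0$ would be an oval, contradicting Huisken's monotonicity formula (tangent flows are backwardly selfsimilar). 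Second, in the converse direction you assert that rescaling accumulating spherical singularities ``produces an ancient oval''; what one actually extracts directly is a limit flow that is $\eps$-spherical at scale $1$ and $\eps$-cylindrical at a comparable scale --- hence not selfsimilar --- and the comparability of the last spherical scale $S(X_i)$ and first cylindrical scale $Z(X_i)$ requires quantitative differentiation \cite{CHN}; only afterwards does the classification identify this non-selfsimilar limit as an oval (Corollary~\ref{cor_ovals}). Finally, a minor economy: in the two-dimensional mean-convex setting \cite{CHHW} is not needed at all; White's theory together with \cite{BC,ADS} suffices, and \cite{CHHW} enters only in the local, higher-dimensional version (Theorem~\ref{thm_equivalence_general}).
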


We note that while tangent flows, i.e. the special case of limit flows where $X_i=X$ is fixed, are always selfsimilarly shrinking by Huisken's monotonicity formula \cite{Huisken_monotonicity}, there is no general mechanism which indicates whether or not limit flows are selfsimilar. The only other available tool in this direction is Hamilton's Harnack inequality \cite{Hamilton_Harnack}, but to apply it to get a selfsimilarly translating limit flow the sequence $X_i$ has to be chosen in some very special way.\footnote{An illustrative example is  the rotationally symmetric degenerate neckpinch from Angenent-Velazquez \cite{AV}. To get the selfsimilarly translating bowl soliton as limit flow one has to choose the sequence $X_i$ along suitable ``tip points".} In particular, White \cite{White_nature} (see also Haslhofer-Hershkovits \cite{HH_ovals}) constructed examples of ancient convex noncollapsed mean curvature flows that are not selfsimilar; these examples are called the ancient ovals, and it is an open problem whether or not they can occur as limit flows.\\

It seems to be known among some experts in the field that there is a close relationship between the potential scenario of ancient ovals as a limit flow and the potential scenario of infinitely many singularities (see also \cite{AAG,CM_arrival,White_ICM} for related discussions regarding the finiteness of singular times, and a heuristic relationship with analytic functions). The purpose of this short note is to state and prove a precise equivalence. To this end, let us first recall another well-known conjecture concerning the finiteness of spherical singularities:

\begin{conjecture}[{finiteness of spherical singularities, see e.g. \cite[Conj.~Q]{Ilmanen_monograph} and \cite[Conj. on p.~1237]{Wang}}]\label{conj_finiteness}
For the mean curvature flow of a smooth closed embedded mean-convex surface there are only finitely many spherical singularities.
\end{conjecture}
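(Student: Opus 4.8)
The plan is to bound the number of spherical singularities directly through the topology of the evolving surface, by treating a spherical singularity as the event in which a convex component collapses to a round point. First I would recall the structure theory for mean-convex flow in $\mathbb{R}^3$ \cite{HK_meanconvex,White_size}: the flow is noncollapsed, its singular set has parabolic dimension at most one, and every tangent flow is a shrinking round sphere, a shrinking round cylinder, or a static multiplicity-one plane. By the convexity estimate, in a spacetime neighborhood of a spherical singularity a connected component of $M_t$ is convex and shrinks to a round point, so that crossing such a singular time removes exactly one genus-zero component. A neck singularity, by contrast, corresponds to a neck pinch, which either lowers the genus of a component or splits one component into two.

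Second, I would set up a topological budget. For almost every $t$ the slice $M_t$ is a smooth closed surface, so its Euler characteristic is locally constant on the open set of regular times, and by the structure theory it can change only at the singular times just described: it increases by $2$ at each neck pinch and decreases by $2$ at each spherical collapse. Tracking the number of components and the total genus from the initial data $M_0=M$, with $c_0$ components and total genus $g$, down to the empty slice at the extinction time, the bookkeeping yields that the number of genus-reducing pinches is at most $g$ and that $N_{\mathrm{sph}}=c_0+N_{\mathrm{split}}$, where $N_{\mathrm{sph}}$ is the number of spherical singularities and $N_{\mathrm{split}}$ the number of component-splitting neck pinches. Since $c_0$ and $g$ are finite, the problem reduces to bounding $N_{\mathrm{split}}$.

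Third, to bound $N_{\mathrm{split}}$ I would try to show that neck singularities cannot accumulate. Using the mean-convex neighborhood theorem for neck singularities \cite{CHHW}, near any neck singularity the flow admits a mean-convex neighborhood together with a sharp classification of its tangent and limit flows; from this I would hope to extract, on a small spacetime scale, a uniform lower bound for the size of any further neck pinching nearby, so that only finitely many splits occur near each neck singularity, and then cover the compact singular set by finitely many such neighborhoods.

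The hard part is exactly this last step. Ruling out the accumulation of ever-smaller neck pinches---together with the tiny convex caps that split off from them---at a single neck singularity is the genuine difficulty: such a cluster would be modeled on the noncollapsed, nonselfsimilar ancient ovals \cite{White_nature,HH_ovals}, whose emergence near a neck singularity is not excluded by the currently available tools. This possible accumulation of singularities is the crux, and is the reason finiteness is a conjecture rather than a theorem.
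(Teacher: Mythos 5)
The statement you were asked to prove is Conjecture \ref{conj_finiteness}: it is an \emph{open} conjecture, the paper offers no proof of it, and your proposal does not prove it either --- as you candidly acknowledge in your last paragraph. What the paper actually establishes (Theorem \ref{thm_equivalence} and Corollary \ref{cor_ovals}) is an equivalence: finitely many spherical singularities if and only if all limit flows are selfsimilar, with the ancient ovals as the precise obstruction. Your ``hard part'' in the third step --- extracting a uniform lower scale bound for further neck pinches near a given neck singularity from the mean-convex neighborhood theorem of \cite{CHHW} --- is not a technical refinement waiting to be done; it is exactly Conjecture \ref{conj_selfsim_limit} in disguise. Indeed, the paper's Proposition \ref{prop1} shows that if spherical singularities $X_i$ accumulate at a cylindrical singularity, then rescaling by the last spherical scale $S(X_i)$, which satisfies $S(X_i)\leq Z(X_i)\leq C\,S(X_i)$ by Huisken's monotonicity and quantitative differentiation \cite{CHN}, yields a limit flow that is $\varepsilon$-spherical at scale $1$ and $\varepsilon$-cylindrical at a bounded scale, hence non-selfsimilar, and by the classification of \cite{BC,ADS} (or Theorem \ref{mean_convex_thm}) it must be an ancient oval; Proposition \ref{prop2} gives the converse. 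So you have correctly identified the crux, but closing it with ``currently available tools'' is precisely what cannot be done, which is why the paper proves an equivalence rather than the conjecture.

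Two concrete corrections to the parts you did write. First, your bookkeeping identity $N_{\mathrm{sph}}=c_0+N_{\mathrm{split}}$ should be the inequality $N_{\mathrm{sph}}\leq c_0+N_{\mathrm{split}}$: a component can become extinct without any spherical singularity, e.g.\ a thin mean-convex torus collapsing to its core circle, where all tangent flows are cylindrical. Second, and more importantly, no Euler-characteristic or genus budget can bound $N_{\mathrm{split}}$, because splitting neck pinches are topologically free (a sphere splits into two spheres at no cost in genus), and the budget argument itself presupposes that singular times do not accumulate --- part of what is in question. Miura's example \cite{Miura}, a compact singular mean-convex initial surface whose flow has countably many spherical singularities (hence countably many splitting necks), shows that any correct proof must use smoothness of the initial surface in an essential way; finite topology and compactness alone, which is all your second step invokes, are demonstrably insufficient. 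At present smoothness is only exploited successfully in the rotationally symmetric setting \cite{AAG}.
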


Here, we say that the flow $\mathcal{M}$ has a spherical singularity at $X$, if the tangent flow at $X$ is a round shrinking sphere, c.f. \cite{HK_meanconvex,White_size,White_nature}. It is known that there can be at most countably many spherical singularities, c.f. \cite{White_stratification}. Conjecture \ref{conj_finiteness} is known to be true for smooth rotationally symmetric initial surfaces by work of Altschuler-Angenent-Giga \cite{AAG}. The smoothness assumption cannot be dropped. Indeed, Miura \cite{Miura} gave examples of singular mean convex initial surfaces, whose flows have countably many spherical singularities.\\

The following theorem shows that Conjecture \ref{conj_selfsim_limit} (selfsimilarity of limit flows) and Conjecture \ref{conj_finiteness} (finiteness of spherical singularities) are equivalent. Specifically, we prove:

\begin{theorem}\label{thm_equivalence} 
Let $\mathcal{M}=\{M_t\}_{t\geq 0}$ be the mean curvature flow of a smooth closed embedded mean-convex surface. Then all limit flows of $\mathcal{M}$ are selfsimilar if and only if $\mathcal{M}$ has only finitely many spherical singularities.
\end{theorem}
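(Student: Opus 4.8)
The plan is to combine the classification of ancient noncollapsed flows with an analysis of how spherical singularities organize themselves near neck singularities, for which the mean convex neighborhood theorem \cite{CHHW} is the decisive input.

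\emph{Step 1 (reduction to ovals).} Every limit flow $\mathcal{M}^\infty$ of $\mathcal{M}$ is an ancient, unit-regular, mean-convex and noncollapsed Brakke flow in $\mathbb{R}^3$, since noncollapsing passes to limits \cite{HK_meanconvex,White_size}. By the classification of such flows, $\mathcal{M}^\infty$ is a static plane, a round shrinking sphere, a round shrinking cylinder, a translating bowl soliton, or an ancient oval; the first four are selfsimilar and only the oval is not \cite{White_nature,HH_ovals}. Hence ``all limit flows are selfsimilar'' is equivalent to ``no ancient oval occurs as a limit flow of $\mathcal{M}$'', and the theorem reduces to the equivalence: an ancient oval occurs as a limit flow if and only if $\mathcal{M}$ has infinitely many spherical singularities.

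\emph{Step 2 (oval $\Rightarrow$ infinitely many spherical singularities).} Suppose an ancient oval $\mathcal{O}$ occurs as a limit flow at $X$, say $\mathcal{M}_{X_i,\lambda_i}\to\mathcal{O}$ with $X_i\to X$ and $\lambda_i\to\infty$. The oval is compact and becomes extinct at a round point, so slightly before its extinction $\mathcal{O}$ is a closed, strictly convex, nearly round surface. By the local regularity theorem the convergence is smooth in the regular part of $\mathcal{O}$, so for each large $i$ the flow $\mathcal{M}_{X_i,\lambda_i}$ contains a closed convex connected component close to this surface; by Huisken's theorem it shrinks to a round point, producing a spherical singularity of $\mathcal{M}_{X_i,\lambda_i}$ near the extinction point of $\mathcal{O}$. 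Undoing the rescaling yields a spherical singularity $Z_i$ of $\mathcal{M}$, occurring at scale comparable to $\lambda_i^{-1}\to 0$ and with $Z_i\to X$. If $\mathcal{M}$ had only finitely many spherical singularities, the $Z_i$ would take finitely many values, forcing (since $Z_i\to X$) the point $X$ itself to be a spherical singularity. But near a spherical singularity the flow is a convex component collapsing to a round point, whose limit flows are round spheres (or planes and cylinders), never ovals; this contradicts that $\mathcal{O}$ is a limit flow at $X$. Thus $\mathcal{M}$ has infinitely many spherical singularities.

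\emph{Step 3 (infinitely many spherical singularities $\Rightarrow$ oval).} Conversely, assume $\mathcal{M}$ has infinitely many spherical singularities $P_j$. As the flow becomes extinct in finite time and stays in a bounded region, the $P_j$ accumulate at some spacetime point $X$, and we may pass to a subsequence of distinct $P_j\to X$. A spherical singularity is the isolated extinction point of a convex component, smooth until extinction with no nearby singularities beforehand, so spherical singularities cannot accumulate at a spherical singularity; as the only singularity models in $\mathbb{R}^3$ are spheres and necks \cite{White_size,White_stratification}, the point $X$ must be a neck singularity. I now invoke the mean convex neighborhood theorem for neck singularities \cite{CHHW}: in a spacetime neighborhood of $X$, after rescaling, the flow is noncollapsed and modeled on a multiplicity-one shrinking cylinder, with the nearby $P_j$ arising as convex caps closing off this neck. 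Recentering at these caps and rescaling so that a cap together with a definite portion of the adjacent neck has unit size, the flows $\mathcal{M}_{P_j,\sigma_j}$ subconverge to an ancient noncollapsed flow $\mathcal{M}^\infty$. This limit is compact, because each cap closes off to produce the spherical singularity, and it has a cylindrical blowdown as $t\to-\infty$, inherited from the neck at $X$; by Step 1 the only such flow is the ancient oval, so an ancient oval occurs as a limit flow.

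The main obstacle is the extraction in Step 3: one must choose the base points $P_j$ and scales $\sigma_j$ so that the subsequential limit is simultaneously compact, ruling out the cylinder, bowl and plane, and non-round, ruling out the sphere. Both features must be read off from the interaction between the neck at $X$ and the approaching caps, and this is exactly where the quantitative control of \cite{CHHW} — pinning the tangent flow at $X$ as a multiplicity-one cylinder and providing a definite neck region on which the flow is graphical over it — is essential: the neck forces the cylindrical past, hence non-roundness, while the spherical singularity forces the cap to close, hence compactness. A secondary technical point is the distinctness claim used in Step 2, which rests on the fact that limit flows at a spherical singularity are never ovals.
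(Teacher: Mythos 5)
Your overall architecture is the same as the paper's: reduce selfsimilarity to the (non-)occurrence of ovals via the classification of ancient noncollapsed flows (this is Proposition \ref{prop3}), show that an oval limit flow forces infinitely many spherical singularities (this is Proposition \ref{prop2}), and show that infinitely many spherical singularities force an oval. Your Steps 1 and 2 are essentially the paper's arguments; the only deviation in Step 2 is the final contradiction, where the paper argues that once the spherical singularities $Z_i$ stabilize at $X$, the rescalings $\mathcal{M}_{X,\lambda_i}$ about the now-\emph{fixed} point converge to a tangent flow, which is backwardly selfsimilar by Huisken's monotonicity \cite{Huisken_monotonicity} and hence cannot be an oval. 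Your substitute claim --- that limit flows at a spherical singularity are never ovals --- is true, but you justify it only by a gesture; the paper's route is cleaner and needs no such lemma.

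The genuine gap is in Step 3, precisely at the point you flag as the ``main obstacle.'' Your prescription to rescale ``so that a cap together with a definite portion of the adjacent neck has unit size'' presupposes that at each spherical singularity $P_j$ the cap (spherical) scale and the neck (cylindrical) scale are \emph{comparable}, and that is exactly what must be proved. The mean convex neighborhood theorem of \cite{CHHW} does not supply it: that theorem gives one-sided motion near the neck singularity and a classification of limit flows at $X_0$, but no quantitative comparison of scales at nearby singularities. Without such a comparison, nothing prevents the ratio of the first cylindrical scale to the last spherical scale at $P_j$ from diverging as $j\to\infty$, in which case no single choice of $\sigma_j$ sees both features: rescaling at the cap scale could yield a round shrinking sphere (the neck escapes to infinity), rescaling at the neck scale yields a cylinder, and at intermediate scales both your compactness claim and your ``cylindrical blowdown inherited from the neck'' claim fail --- the latter is in any case an unjustified transfer of the tangent flow at $X$ to the blowdown of a limit flow based at different points. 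The paper closes this gap in Proposition \ref{prop1} with a precise mechanism: working over dyadic scales $r_\alpha=2^\alpha$, define the last spherical scale $S(X_i)$ and the first cylindrical scale $Z(X_i)$; Huisken's monotonicity gives $S(X_i)\leq Z(X_i)$, and quantitative differentiation \cite{CHN} bounds the number of dyadic scales at which the flow is close to no shrinker, which forces $Z(X_i)/S(X_i)\leq C$. Rescaling by $S(X_i)^{-1}$ then produces a limit that is $\varepsilon$-spherical at scale $1$ and $\varepsilon$-cylindrical at a scale in $[1,C]$, hence non-selfsimilar, and hence (by the classification already used in Step 1) an oval. As a side remark, \cite{CHHW} is not needed anywhere in the proof of Theorem \ref{thm_equivalence}: the surface is globally mean convex, so White's theory \cite{White_size,White_nature} applies directly, and the paper invokes \cite{CHHW} only for the local, higher-dimensional Theorem \ref{thm_equivalence_general}.
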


In particular, as a corollary (of the proof) we obtain:

\begin{corollary}\label{cor_ovals} 
The ancient ovals occur as a limit flow of $\mathcal{M}$ if and only if there is a sequence of spherical singularities converging to a cylindrical singularity.
\end{corollary}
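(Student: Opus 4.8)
The plan is to derive the corollary from Theorem \ref{thm_equivalence} and its proof, together with a short structural argument about how spherical singularities accumulate. I would first isolate the two inputs from that proof which do the real work. Every limit flow of $\mathcal{M}$ is an ancient noncollapsed flow in $\mathbb{R}^3$, and by the classification of such flows the only non-selfsimilar possibilities are the ancient ovals; consequently ``some limit flow fails to be selfsimilar'' is literally the same statement as ``an ancient oval occurs as a limit flow.'' Combining this with Theorem \ref{thm_equivalence} already gives the equivalence: an ancient oval occurs as a limit flow of $\mathcal{M}$ if and only if $\mathcal{M}$ has infinitely many spherical singularities. In the construction carried out in the proof of Theorem \ref{thm_equivalence} one moreover sees directly that the non-selfsimilar limit flow produced from an accumulation of spherical singularities is indeed an ancient oval, so this identification is built into that argument.

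It then remains to upgrade ``infinitely many spherical singularities'' to ``a sequence of spherical singularities converging to a cylindrical singularity.'' The backward implication is immediate, since a convergent sequence of distinct spherical singularities exhibits infinitely many of them. For the forward implication I would use that $\mathcal{M}$ becomes extinct in finite time, so that all of its singularities lie in a compact region of spacetime and an infinite collection of spherical singularities must have an accumulation point $X_\infty$. As the regular set is open, $X_\infty$ is itself singular, and in the mean-convex setting in $\mathbb{R}^3$ every singularity is modeled on a round shrinking sphere or a round shrinking cylinder (a multiplicity-one plane as tangent flow would correspond instead to a regular point). Spherical singularities, however, are isolated in spacetime, since near such a point the flow is a convex region collapsing to a round point \cite{HK_meanconvex, White_size}; hence $X_\infty$ cannot be spherical and must be a cylindrical (neck) singularity, which furnishes the desired convergent sequence.

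The genuinely delicate content sits entirely in the two classification facts imported from the proof of Theorem \ref{thm_equivalence}: that the non-selfsimilar limit flow arising near an accumulation of spherical singularities is precisely an ancient oval, and conversely that an ancient oval as a limit flow forces infinitely many spherical singularities. These rest on the mean-convex neighborhood structure at a neck singularity from \cite{CHHW} and on the rigidity of the ancient ovals \cite{White_nature, HH_ovals}. I therefore expect the main obstacle to be not the elementary accumulation argument above, but rather ensuring that the limit flow is identified as the ancient oval rather than as merely ``some non-selfsimilar flow,'' and that the accumulation point is correctly pinned down as a neck; both are delivered by the same ingredients that power Theorem \ref{thm_equivalence}, which is exactly why the statement is a corollary of the proof rather than of the theorem alone.
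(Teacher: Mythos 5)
Your proof is correct and follows essentially the same route as the paper: the corollary is extracted from the proof of Theorem \ref{thm_equivalence}, combining the classification of limit flows (so that ``non-selfsimilar limit flow'' and ``ancient oval as limit flow'' coincide, via Propositions \ref{prop2} and \ref{prop3}) with the accumulation argument showing that infinitely many spherical singularities yield a sequence of them converging to a cylindrical singularity. One small attribution point: in this $2$-dimensional mean-convex setting the classification underlying Proposition \ref{prop3} comes from White's structure theory together with Brendle--Choi \cite{BC} and Angenent--Daskalopoulos--Sesum \cite{ADS}, whereas the mean-convex neighborhood theorem of \cite{CHHW} is only needed for the local, higher-dimensional Theorem \ref{thm_equivalence_general}.
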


More generally, we establish a local version of this equivalence that holds for neck singularities in arbitrary dimension. To describe this, given any smooth closed embedded hypersurface $M=\partial K\subset\mathbb{R}^{n+1}$, as in \cite{HW} we denote by $(\mathcal{M},\mathcal{K})$ and $(\mathcal{M}',\mathcal{K'})$ the outer and inner flow, respectively.\footnote{In particular, we have that $M_t=M_t'$ and $K_t'=\overline{\mathbb{R}^{n+1}\setminus K_t}$ for at least as long as the level set flow does not fatten.} As in \cite[Def. 1.17]{CHHW} we say that the mean curvature evolution of $M$ has an inwards or outwards neck singularity at $X$ if the rescaled flow $\mathcal{D}_\lambda(\mathcal{K}-X)$ or $\mathcal{D}_\lambda(\mathcal{K}'-X)$, respectively, converges for $\lambda\to \infty$ locally smoothly with multiplicity $1$ to a round shrinking solid cylinder $\{\bar{B}^n(\!\!\sqrt{2(n-1)|t|})\times \mathbb{R} \}_{t<0}$, up to rotation. Using these notions, we can now state our local result in arbitrary dimensions:

\begin{theorem}\label{thm_equivalence_general} 
Suppose the mean curvature evolution of a smooth closed embedded hypersurface $M$ has an inwards (respectively outwards) neck singularity at $X_0=(x_0,t_0)$. Then there exists an $\varepsilon>0$ such that $[t_0-\varepsilon,t_0+\varepsilon]\ni t\mapsto M_t\cap B_\varepsilon(x_0)$ (respectively $M_t'\cap B_\varepsilon(x_0)$) has only finitely many spherical singularities if and only if the ancient ovals do not occur as limit flow of $\mathcal{M}$ (respectively $\mathcal{M}'$) at $X_0$.
\end{theorem}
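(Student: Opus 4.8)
The plan is to prove the contrapositive in both directions, using the solution of the mean convex neighborhood conjecture for neck singularities from \cite{CHHW} as the central structural input, which guarantees that in a small parabolic neighborhood of $X_0$ the flow behaves like the mean convex flow analyzed in \cite{HK_meanconvex, White_size}. First I would set up the local picture: by the neck singularity assumption, there is a parabolic neighborhood of $X_0$ in which the (inner or outer) region is mean convex with the canonical neighborhood structure, so that all tangent flows are either shrinking spheres, shrinking cylinders (necks), or translating bowls, and the only non-selfsimilar blowup candidate is the ancient oval. This reduces Theorem \ref{thm_equivalence_general} to a purely local version of Theorem \ref{thm_equivalence}, and indeed the global Theorem \ref{thm_equivalence} should follow as the special case where the whole flow is already mean convex.

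For the direction ``infinitely many spherical singularities $\Rightarrow$ ancient oval occurs as a limit flow," I would take a sequence $X_i\to X_0$ of distinct spherical singularities (extracting a convergent subsequence, which exists since the local region is compact). At each $X_i$ the tangent flow is a round shrinking sphere; the idea is to choose rescaling factors $\lambda_i\to\infty$ and base points cleverly so that the limit flow ``sees" the transition between the sphere scale at $X_i$ and the neck scale at $X_0$. Concretely, since $X_i\to X_0$ and $X_0$ is a neck singularity, for appropriately tuned $\lambda_i$ the rescaled flows $\mathcal{M}_{X_i,\lambda_i}$ interpolate between a shrinking-sphere profile (locally, near $X_i$ at small scales) and a neck/cylinder profile (at larger scales, reflecting $X_0$). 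A limit that is simultaneously compact, convex, noncollapsed and ancient, yet asymptotic to a cylinder rather than to a sphere or bowl, must by the classification in \cite{HH_ovals, White_nature} and the uniqueness results of \cite{CHHW} be an ancient oval. The technical heart here is the correct calibration of $\lambda_i$ (and possibly a small time/space shift of the base point) so that the limit is neither a static plane, a shrinking sphere, a neck, nor a bowl, but genuinely the oval — this is where I expect the real work to be.

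For the converse, ``ancient oval occurs as a limit flow $\Rightarrow$ infinitely many spherical singularities," I would argue again by contraposition: assume only finitely many spherical singularities in the local neighborhood and derive that no oval can arise as a limit flow. With finitely many spherical singularities, one can choose $\varepsilon$ small enough that $X_0$ is the unique neck singularity in the parabolic ball and there are \emph{no} spherical singularities in the punctured neighborhood. The ancient oval is compact and convex, so if it arose as a limit flow $\mathcal{M}^\infty=\lim \mathcal{M}_{X_i,\lambda_i}$, then for large $i$ the rescaled flows would contain regions that are approximately compact convex caps shrinking to points — i.e., the flow near $X_i$ at the relevant scale would have to develop a spherical-type singularity (the ``tip" of the oval collapses in finite forward time to a round point). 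This would force a spherical singularity in the punctured neighborhood, contradicting finiteness. Making this rigorous requires the upper semicontinuity/persistence of the singularity type under the rescaling and passage to the limit, which is supplied by the canonical neighborhood theorem and the sphere/neck dichotomy of \cite{CHHW}.

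The main obstacle, in both directions, is the \emph{calibration} of the blow-up sequence and the sharp use of the classification of ancient noncollapsed flows: I must ensure that the oval I produce (or exclude) is not accidentally degenerating into one of the other selfsimilar models, and this hinges delicately on the quantitative control of the transition scale between the spherical singularities at $X_i$ and the neck singularity at $X_0$. I would therefore expect the bulk of the effort to go into the continuity/compactness estimates that pin down the limit flow's asymptotics, with the classification theorems of \cite{HH_ovals, White_nature, CHHW} then identifying it as the oval.
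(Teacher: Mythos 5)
Your overall architecture does match the paper's: localize via the mean convex neighborhood theorem of \cite{CHHW} (together with \cite[Thm. B3]{HW}, which provides a unit-regular integral Brakke flow whose support agrees with the inner/outer flow with multiplicity one), prove one direction by blowing up along spherical singularities $X_i\to X_0$, and prove the converse by showing that an oval limit flow forces nearby spherical singularities. Your converse direction is essentially the paper's Proposition \ref{prop2}: the time $-1$ slice of the oval is compact and strictly convex, so for large $i$ the corresponding components of the rescaled flows are strictly convex and hence become extinct at round points, producing spherical singularities $(x_i',t_i')\to X_0$; finiteness forces $(x_i',t_i')=(x_0,t_0)$ for large $i$, which is impossible (in the paper because tangent flows are backwardly selfsimilar, and in your local setting also because $X_0$ is a neck, not a spherical, singularity). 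You should add the one line ruling out that the produced singularity sits at $X_0$ itself, but that is a minor fix.

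The genuine gap is in the forward direction, and you flagged it yourself: the ``correct calibration of $\lambda_i$'' that you defer as ``where I expect the real work to be'' is precisely the content of the paper's Proposition \ref{prop1}, and without it you have no proof. The paper's resolution: fix $\eps>0$ and define, over dyadic scales $r_\alpha=2^\alpha$, the \emph{last spherical scale} $S(X)$ (supremum of scales at which the flow around $X$ is $\eps$-close to a round shrinking sphere) and the \emph{first cylindrical scale} $Z(X)$ (infimum of scales at which it is $\eps$-close to a round shrinking cylinder). For spherical singularities $X_i$ converging to the neck point $X_0$ one has $S(X_i)>0$, $Z(X_i)\to 0$, and $S(X_i)\le Z(X_i)$ by Huisken's monotonicity; the decisive input is quantitative differentiation \cite{CHN}, which gives the uniform ratio bound $Z(X_i)/S(X_i)\le C$. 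Blowing up by $\lambda_i=S(X_i)^{-1}$ then produces a limit flow that is $\eps$-spherical at scale $1$ and $\eps$-cylindrical at some scale in $[1,C]$, hence not selfsimilar; Theorem \ref{mean_convex_thm} (the \cite{CHHW} classification of limit flows at a neck singularity as sphere, cylinder, bowl, or oval) then identifies it as an ancient oval, since the other three are selfsimilar. Note that the identification goes through non-selfsimilarity rather than through your proposed characterization of the limit as a ``compact convex ancient flow asymptotic to a cylinder,'' and the relevant classification results are \cite{BC}, \cite{ADS} and \cite{CHHW}, not \cite{HH_ovals}; without the ratio bound from \cite{CHN}, nothing prevents the sphere scale and the cylinder scale from separating in the limit, in which case your blow-up could degenerate to a selfsimilar sphere or cylinder and the argument would fail.
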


To establish these equivalences we proceed as follows. First, we show that if there is a sequence of spherical singularities converging to a cylindrical singularity, then we can rescale by suitable factors around suitable centers to get a limit flow that is not selfsimilar. Next, we show that the ovals cannot occur as limit flow if there are only finitely many spherical singularities. Together with recent results from Brendle-Choi \cite{BC}, Angenent-Daskalopoulos-Sesum \cite{ADS} and Choi-Haslhofer-Hershkovits-White \cite{CHHW} we can then conclude the proof. For the sake of exposition, we first give the proof in the simpler $2$-dimensional global setting of Theorem \ref{thm_equivalence}, and afterwards describe the necessary generalizations needed for the proof of Theorem \ref{thm_equivalence_general}.\\

Finally, it seems reasonable to expect that similar results should hold for $3$-dimensional Ricci flow through singularities, as introduced by Kleiner-Lott \cite{KL_singular}.\\

\bigskip

\noindent\textbf{Acknowledgments.} RH has been partially supported by an NSERC Discovery Grant (RGPIN-2016-04331) and a Sloan Research Fellowship. OH has been partially supported by a Koret Foundation early career scholar award.\\

\bigskip

\section{The proofs}

To prove Theorem \ref{thm_equivalence}, we proceed by establishing the following three propositions.

\begin{prop}\label{prop1}
If there is a sequence of spherical singularities converging to a cylindrical singularity, then there occurs a limit flow that is not selfsimilar.
\end{prop}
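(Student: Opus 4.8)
The plan is to produce an ancient oval as a limit flow at the cylindrical singularity $X_0=(x_0,t_0)$, by rescaling around the spherical singularities $X_i=(x_i,t_i)\to X_0$ at scales $\lambda_i\to\infty$ tuned to capture the transition between the round sphere (the tangent flow at $X_i$) and the round cylinder (the tangent flow at $X_0$). Since every limit flow of a mean-convex flow is ancient, convex and noncollapsed \cite{HK_meanconvex,White_size}, and since by the classification of such flows \cite{BC,ADS,CHHW} the only non-selfsimilar ones are the ancient ovals, it suffices to exhibit a limit flow at $X_0$ that is \emph{compact} but \emph{not} a round shrinking sphere.

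To select the scales, recall that a spherical singularity is modelled on a collapsing round point: by the local regularity theorem \cite{White_size,HK_meanconvex} and the convexity estimates of Huisken--Sinestrari, for $t<t_i$ close to $t_i$ the flow near $x_i$ is a compact convex piece $P_i(t)$ shrinking to $x_i$, which is noncollapsed and whose eccentricity $e_i(t):=\diam(P_i(t))/w(P_i(t))$, where $w$ denotes the minimal width, tends to $1$ as $t\uparrow t_i$. I would then run an intermediate-value argument on the continuous, scale-invariant quantity $e_i$: because $X_i$ lies close to the cylindrical singularity $X_0$, the piece $P_i$ has emerged from a long neck, so $e_i$ becomes large (while $P_i$ is still a closed piece) as one traces it back in time. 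Hence for each large $i$ there is a time $\tau_i<t_i$ with $e_i(\tau_i)=3$ (say); centering at $y_i=(c_i,\tau_i)$ with $c_i$ the center of $P_i(\tau_i)$ and setting $\lambda_i=1/w(P_i(\tau_i))$ defines the rescaling. Since $w(P_i(\tau_i))\to 0$, we get $\lambda_i\to\infty$, and since $y_i\to X_0$, any subsequential limit of $\mathcal{M}_{y_i,\lambda_i}$ is a limit flow at $X_0$.

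Passing to such a subsequential limit $\mathcal{M}^\infty$, by White's regularity theorem \cite{White_size} and Haslhofer--Kleiner noncollapsing \cite{HK_meanconvex} it is ancient, convex and noncollapsed, and by construction its time slice is compact with eccentricity $3$, hence not a round shrinking sphere. By the classification of compact ancient noncollapsed flows \cite{ADS,BC} it must therefore be an ancient oval, and in particular it is not selfsimilar \cite{White_nature,HH_ovals}.

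The main obstacle is the scale selection, i.e. controlling the collapsing region $P_i$ over the whole relevant time interval: one must show that $P_i(t)$ remains a genuine compact convex piece (rather than opening up into the ambient neck) for as long as is needed to reach eccentricity $3$, and that its eccentricity indeed grows this large, both of which quantify the closeness of the spherical singularities $X_i$ to the cylindrical singularity $X_0$. The two uniqueness statements for tangent flows — the sphere at $X_i$ and the cylinder at $X_0$ — together with the convexity and noncollapsing estimates are the tools that make this matching of scales work; once the limit is known to be compact and non-round, its identification as an oval is immediate from the classification.
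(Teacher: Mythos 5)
Your overall strategy is reasonable, and your final step (a compact limit slice of eccentricity $3$ forces an ancient oval via \cite{ADS,BC}, hence non-selfsimilarity) is sound, but the proof has a genuine gap exactly where you flag ``the main obstacle'': you never prove that the eccentricity $e_i$ of the compact piece $P_i$ actually reaches $3$ while $P_i$ is a compact (convex) component, so your intermediate-value argument has no starting point. Nothing in the hypothesis visibly rules out the scenario in which each $P_i$ separates from the ambient flow by two neck-pinches occurring at axial distance comparable to its width, so that at the moment it becomes a compact component it is already a nearly round ball with two small cusps, and then $e_i(t)$ stays below $3$ for its entire life $[\sigma_i,t_i)$; the phrase ``$P_i$ has emerged from a long neck'' is an assertion about the relative sizes of the sphere scale and the neck scale at $X_i$, and that comparison of scales is precisely the content of the proposition, not a consequence of the soft facts you list (uniqueness of the two tangent flows, convexity, noncollapsing). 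Ruling the short-bead scenario out is possible, but the arguments I can see for it already use the full machinery (a bound between the spherical and cylindrical scales plus White's convexity of blow-up limits plus the classification), i.e.\ they are not a routine supplement to your outline but a replacement for it. There are also secondary unverified claims of the same nature: that $w(P_i(\tau_i))\to 0$ (so that $\lambda_i\to\infty$ and $y_i\to X_0$) is not automatic and needs an argument (if $t_i-\tau_i\not\to 0$ one must exclude infinitely many disjoint ``macroscopic'' pieces accumulating at $X_0$).

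For comparison, the paper's proof sidesteps eccentricity and compact pieces entirely: for each spherical singularity $X_i$ it considers the last dyadic scale $S(X_i)$ at which the flow around $X_i$ is $\varepsilon$-close to a shrinking sphere and the first scale $Z(X_i)$ at which it is $\varepsilon$-close to a shrinking cylinder; $Z(X_i)\to 0$ because $X_i$ converges to a cylindrical singularity, $S(X_i)\leq Z(X_i)$ by Huisken's monotonicity, and -- this is the key input your proposal is missing -- quantitative differentiation \cite{CHN} gives the uniform bound $Z(X_i)/S(X_i)\leq C$. Blowing up at $X_i$ by $S(X_i)^{-1}$ then yields a limit flow that is $\varepsilon$-spherical at scale $1$ and $\varepsilon$-cylindrical at a scale in $[1,C]$, which is already incompatible with selfsimilarity; no classification theorem is needed at this stage. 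Replacing the quantitative-differentiation step by something of equal strength is what your proof would require to close the gap.
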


\begin{proof}
Fix $\eps>0$ small enough. Given $X\in\mathcal{M}$ we consider the rescaled flow $\mathcal{M}_{X,1/r_{\alpha}}$ on diadic annuli of radius $r_\alpha=2^\alpha$, where $\alpha\in\mathbb{Z}$.
Denote by $S(X)$ the last spherical scale, i.e. the supremum of $r_\alpha$ such that $\mathcal{M}$ is $\eps$-close around $X$ at scale $r_\alpha$ to a round shrinking sphere.
Denote by $Z(X)$ the first cylindrical scale, i.e. the infimum of $r_\alpha$ such that $\mathcal{M}$ is $\eps$-close around $X$ at scale $r_\alpha$ to a round shrinking cylinder.

Now assume $X_i\in \mathcal{M}$ is a sequence of spherical singularities converging to a cylindrical singularity $X\in \mathcal{M}$. Since we have spherical singularities it holds that $S(X_i)>0$. On the other hand, since the flow has a cylindrical singularity at $X$ and since $X_i\to X$, for $i$ large enough we have $Z(X_i)<\infty$, and in fact
\begin{equation}
\lim_{i\to \infty} Z(X_i)=0.
\end{equation}
By Huisken's monotonicity formula \cite{Huisken_monotonicity}, we clearly have $S(X_i)\leq Z(X_i)$. By quantitative differentiation, see e.g. \cite{CHN}, the ratio between these scales is bounded, i.e.
\begin{equation}
\frac{Z(X_i)}{S(X_i)}\leq C.
\end{equation}
Consider the rescaled flows $\mathcal{M}_{X_i,S(X_i)^{-1}}$ and pass to a subsequential limit $\mathcal{M}^\infty$. Then, $\mathcal{M}^\infty$ is $\eps$-spherical at scale $1$ and $\eps$-cylindrical at some scale between $1$ and $C$. Hence, the limit flow $\mathcal{M}^\infty$ is not selfsimilar.
\end{proof}

\begin{prop}\label{prop2}
If there are only finitely many spherical singularities, then ancient ovals do not occur as limit flow.
\end{prop}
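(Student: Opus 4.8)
The plan is to argue by contradiction. Suppose $\mathcal{M}$ has only finitely many spherical singularities, yet some ancient oval nevertheless occurs as a limit flow, say $\mathcal{M}_{X_i,\lambda_i}\to\mathcal{M}^\infty$ with $X_i\to X$, $\lambda_i\to\infty$, and $\mathcal{M}^\infty$ an ancient oval. Recall that an ancient oval is a compact, strictly convex, ancient mean curvature flow, so each of its time-slices is a compact strictly convex hypersurface. I would fix a single time $t_0$ strictly before the extinction time of $\mathcal{M}^\infty$ and consider the slice $\Sigma_{t_0}\subset B_{R/2}$. Because the limit flow is attained with multiplicity one, locally smoothly, and with Hausdorff convergence of the supports, for all large $i$ the time-$t_0$ slice of $\mathcal{M}_{X_i,\lambda_i}$ meets $B_R$ in a single closed hypersurface $\Sigma_{t_0}^i$, a $C^2$-small normal graph over $\Sigma_{t_0}$; in particular $\Sigma_{t_0}^i\subset B_{R/2}$ is strictly convex and the remainder of the rescaled flow lies outside $B_R$. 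Thus $\Sigma_{t_0}^i$ is a closed, strictly convex, connected component of the slice.

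Next I would evolve $\Sigma_{t_0}^i$ forward. It bounds a compact convex body inside $B_R$ and is disjoint from the rest of the flow, so by the avoidance principle it keeps shrinking in isolation, and by Huisken's theorem that compact convex hypersurfaces contract to round points it vanishes at a round point. Hence $\mathcal{M}_{X_i,\lambda_i}$ has a spherical singularity at a spacetime point that stays bounded as $i\to\infty$; note that only the convergence near the single time $t_0$ is used here, the contraction afterwards being governed by Huisken's theorem alone. Undoing the rescaling produces a spherical singularity $Y_i$ of $\mathcal{M}$ lying within parabolic distance $O(\lambda_i^{-1})$ of $X_i$, so that $Y_i\to X$.

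To close the argument I would observe that $X$ itself cannot be a spherical singularity: since $\mathcal{M}^\infty$ is an oval rather than a round shrinking sphere, if $X$ were spherical then, by the roundness of the vanishing component, a whole backward parabolic neighborhood of $X$ would be modeled on a shrinking sphere and every limit flow at $X$ would be selfsimilar (a shrinking sphere or a static plane), excluding the oval. But under the finiteness hypothesis the $Y_i$ lie in a finite set of spherical singularities while $Y_i\to X$; a convergent sequence in a finite set is eventually constant and equal to its limit, forcing $X$ to be one of these spherical singularities --- a contradiction. Therefore no ancient oval can occur as a limit flow.

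The hard part is the extraction step: upgrading the mere convergence to the compact oval into a genuine closed convex \emph{component} $\Sigma_{t_0}^i$ of the approximating flow, and then ensuring, via the avoidance principle and mean-convexity, that this component contracts to a bona fide spherical singularity of $\mathcal{M}_{X_i,\lambda_i}$ without interference from the rest of the connected surface $M$. A secondary point requiring care is the claim that the base point $X$ of an oval limit flow is never itself a spherical singularity, which rests on Huisken's roundness for the vanishing convex component.
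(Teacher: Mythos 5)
Your proof is correct in its main mechanism, and that mechanism is exactly the paper's: assume an oval arises as $\mathcal{M}_{X_i,\lambda_i}\to\mathcal{M}^\infty$, use smooth multiplicity-one convergence at a fixed pre-extinction time to extract a compact strictly convex component of the rescaled flow, evolve it by Huisken's theorem (with the avoidance principle keeping the rest of the flow away) to produce spherical singularities $Y_i$ of $\mathcal{M}$ within parabolic distance $O(\lambda_i^{-1})$ of $X_i$, and then use finiteness to force $Y_i=X$ for large $i$. Where you diverge is the endgame. The paper observes that the rescaled singular points $(y_i,s_i)$ converge to the space-time origin (the oval's extinction point), so the recentered flows $\mathcal{M}_{Y_i,\lambda_i}=\mathcal{M}_{X_i,\lambda_i}-(y_i,s_i)$ still converge to the oval; once $Y_i=X_0$ is constant, this says the \emph{tangent} flow at $X_0$ along $\lambda_i$ is an oval, contradicting the backward selfsimilarity of tangent flows from Huisken's monotonicity formula --- no further input needed. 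You instead conclude that $X$ is a spherical singularity and invoke the claim that no oval can arise as a limit flow at a spherical singularity. That claim is true, but it is a genuine lemma, not a one-liner: one must show (i) a full space-time neighborhood of $X$ contains only the shrinking convex component (avoidance plus a sphere-barrier argument after extinction), and (ii) any rescaling limit of an asymptotically round shrinking flow, with arbitrary centers $X_i\to X$ and scales $\lambda_i\to\infty$, is a shrinking sphere or a static plane --- this requires a case analysis comparing $\lambda_i(x_i-x_0)$ and $\lambda_i^2(t_i-t_0)$ with the singularity scale, using that Huisken's convergence is smooth and global. Your sketch ("a backward parabolic neighborhood modeled on a shrinking sphere") names the right ingredients but does not carry out this step.

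The gap is easily closed, and most cheaply by reverting to the paper's recentering trick, which your construction already supports: the same convergence that gives you $\Sigma^i_{t_0}\to\Sigma_{t_0}$ gives $(y_i,s_i)\to(0,0)$ (the component's flow converges to the oval's flow, so its round extinction point converges to the oval's extinction point), and then $Y_i=X$ for large $i$ makes the fixed-center blow-up sequence $\mathcal{M}_{X,\lambda_i}$ converge to an oval, which Huisken's monotonicity forbids. In short: same extraction argument, but your final contradiction trades the paper's two-line appeal to monotonicity for an unproved (though correct and standard) classification of limit flows at spherical singularities.
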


\begin{proof}
Suppose towards a contradiction that there are $X_i=(x_i,t_i) \to X_0=(x_0,t_0)$ and $\lambda_i\to\infty$ such that $\mathcal{M}_{X_i,\lambda_i}$ converges to an ancient oval which becomes extinct at $t=0$ at the origin. Since $\lambda_i\cdot(M_{t_i-\lambda_i^{-2}}-x_i)$ converges to the time $t=-1$ slice of the oval, which is compact and strictly convex, there exists an $i_0$ such that, for $i>i_0$, the oval-like component of $\lambda_i\cdot (M_{t_i-\lambda_i^{-2}}-x_i)$ is strictly convex and hence the flow of that component has a spherical singularity at some $(y_i,s_i) \in \mathcal{M}_{X_i, \lambda_i}$. Consequently, the unrescaled flow $\mathcal{M}$ has a spherical singularities at $(x_i+\lambda_i^{-1} y_i, t_i+\lambda_i^{-2} s_i)=:(x_i',t_i')$. 

By the convergence to the oval, we have $y_i\to 0$ and $s_i \to 0$, and thus it is not hard to see that $\mathcal{M}_{(x'_i,t'_i),\lambda_i}$ also converges to an oval, which becomes extinct at $(x_0,t_0)$. Since there are only finitely many spherical singularities, and since $(x_i',t_i')$ converges to $(x_0,t_0)$, we infer that $(x_i',t_i')=(x_0,t_0)$ for large $i$.

In summary, $\mathcal{M}_{(x_0,t_0),\lambda_i}$ converges for $i\to\infty$ to an ancient oval; this is a contradiction to the fact that tangent flows are always backwardly selfsimilar.
 \end{proof}

\begin{prop}\label{prop3}
If the ancient ovals do not occur as limit flow, then all limit flows are selfsimilar.
\end{prop}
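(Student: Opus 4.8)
The plan is to prove the contrapositive: assuming that not all limit flows are selfsimilar, I would produce an ancient oval as a limit flow. So suppose $\mathcal{M}^\infty$ is a non-selfsimilar limit flow, arising as a subsequential limit of $\mathcal{M}_{X_i,\lambda_i}$. The first step is to classify what $\mathcal{M}^\infty$ can possibly be. Since $\mathcal{M}$ is mean-convex, standard structure theory for mean-convex flow (see \cite{HK_meanconvex,White_size,White_nature}) guarantees that $\mathcal{M}^\infty$ is an ancient, weakly convex, noncollapsed (unit-regular integral Brakke) flow. The key input is the classification of such flows: by the resolution of the mean convex neighborhood conjecture and the recent uniqueness results of Brendle-Choi \cite{BC}, Angenent-Daskalopoulos-Sesum \cite{ADS}, and Choi-Haslhofer-Hershkovits-White \cite{CHHW}, every ancient noncollapsed mean curvature flow in $\mathbb{R}^3$ is one of the following: a static plane, a round shrinking sphere, a round shrinking cylinder, a translating bowl soliton, or an ancient oval.

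The second step is to observe that four of these five possibilities are selfsimilar. The static plane is static, the shrinking sphere and shrinking cylinder are selfsimilarly shrinking, and the bowl soliton is selfsimilarly translating. Therefore, the only non-selfsimilar option in the classification is the ancient oval. Hence, if $\mathcal{M}^\infty$ fails to be selfsimilar, it must be an ancient oval. This is exactly the conclusion needed for the contrapositive: a non-selfsimilar limit flow is necessarily an ancient oval, so the nonoccurrence of ancient ovals as limit flows forces all limit flows to be selfsimilar.

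The main obstacle — and the reason this proposition is short yet nontrivial — is precisely that it rests entirely on the full classification of ancient noncollapsed flows in $\mathbb{R}^3$, which is a deep theorem assembled from \cite{BC}, \cite{ADS}, and \cite{CHHW}. Strictly, one should verify that the limit flow genuinely falls within the hypotheses of that classification: it must be checked that $\mathcal{M}^\infty$ is ancient (which follows since $\lambda_i\to\infty$ pushes the initial time to $-\infty$), that it is weakly convex and noncollapsed (inherited from the mean-convex structure theory via the $\alpha$-noncollapsedness estimate of \cite{HK_meanconvex,White_size}, which is scale-invariant and hence passes to limits), and that it is nonflat and compact-or-cylindrical in the appropriate sense so that the classification applies rather than degenerating to a lower-dimensional stratum. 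Once these regularity and convexity properties are confirmed to survive the blowup, the argument is immediate.

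Thus the proof I would write is a one-line invocation of the classification preceded by a careful check that the limit flow is ancient, weakly convex, and noncollapsed. I expect the actual write-up to simply cite \cite{BC,ADS,CHHW} for the statement that the only non-selfsimilar ancient noncollapsed flow is the oval, and to combine the three propositions to complete the proof of Theorem \ref{thm_equivalence}.
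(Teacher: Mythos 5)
Your proposal is correct and follows essentially the same route as the paper: invoke White's structure theory \cite{White_size,White_nature} (see also \cite{HK_meanconvex}) to see that limit flows are ancient, convex, and noncollapsed, then apply the classification of Brendle-Choi \cite{BC} and Angenent-Daskalopoulos-Sesum \cite{ADS} to conclude that the ancient oval is the only non-selfsimilar possibility. Your contrapositive framing and the extra citation of \cite{CHHW} (which the paper reserves for the higher-dimensional local result) are only cosmetic differences.
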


\begin{proof}
By White's regularity and structure theory for mean-convex mean curvature flow \cite{White_size,White_nature} (see also Haslhofer-Kleiner \cite{HK_meanconvex}) all limit flows are noncollapsed, convex, and smooth until they become extinct. Thus, by the recent classification of Brendle-Choi \cite{BC} and Angenent-Daskalopoulos-Sesum \cite{ADS} every limit flow must be one of the following: a static plane, a round shrinking sphere, a round shrinking cylinder, a translating bowl solition, or an ancient oval. All except the last one are selfsimilar. This implies the assertion.
\end{proof}

We can now establish the claimed equivalences.

\begin{proof}[{Proof of Theorem \ref{thm_equivalence}}]
If there are only finitely many spherical singularities, then by Proposition \ref{prop2} the ancient ovals cannot occur as limit flow. Together with Proposition \ref{prop3} this implies that all limit flows are selfsimilar. Assume now there are spherical singularities at infinitely many points $X_i$. After passing to a subsequence, the points $X_i$ converge to some point $X$. By the semi-continuity of Huisken's density \cite{Huisken_monotonicity}, the point $X$ must be a singular point. Since for the flow of mean convex surfaces all tangent flows are either round shrinking spheres or round shrinking cylinders by \cite{White_size,White_nature} (see also \cite{HK_meanconvex}), and since spherical singularities are isolated, it follows that $\mathcal M$ has a cylindrical singularity at $X$. Hence, applying Proposition \ref{prop1} we conclude that we get a limit flow that is not selfsimilar.
\end{proof}

To deal with the general case of neck singularities, we employ the recent resolution of the mean-convex neighborhood conjecture for neck singularities from  \cite{CHHW}, which we now quote for convenience of the reader. 
\begin{theorem}[{\cite[Thm. 1.17]{CHHW}}]\label{mean_convex_thm}
Assume $X_0=(x_0,t_0)$ is a space-time point at which the evolution of a smooth closed embedded hypersurface $M\subset\mathbb{R}^{n+1}$ by mean curvature flow has an inward neck singularity. Then there exists an $\varepsilon=\varepsilon(X_0)>0$ such that
\begin{equation}
\quad K_{t_2}\cap B(x_0,\varepsilon)\subseteq K_{t_1}\setminus M_{t_1} 
\end{equation} 
for all $t_0-\varepsilon< t_1< t_2 < t_0+\varepsilon$. Similarly, if the evolution has an outward neck singularity at $X_0$, then there exists some $\varepsilon=\varepsilon(X_0)>0$ such that
\begin{equation}
\quad K_{t_2}'\cap B(x_0,\varepsilon)\subseteq K_{t_1}'\setminus M_{t_1}'
\end{equation}
for all $t_0-\varepsilon< t_1< t_2 < t_1+\varepsilon$. Furthermore, in both cases, any nontrivial limit flow at $X_0$ is either a round shrinking sphere, a round shrinking cylinder, a translating bowl soliton or an ancient oval.
\end{theorem}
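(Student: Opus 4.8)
The plan is to prove the nesting statements and the limit-flow classification together, in two stages: a global classification of the ancient flows that can arise near the singularity, followed by a local blow-up contradiction that extracts the monotone nesting property from that classification.

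The core is the last sentence of the statement: every nontrivial limit flow at an inward (or outward) neck singularity is a round shrinking sphere, a round shrinking cylinder, a translating bowl soliton, or an ancient oval. To establish this I would first record that, by the neck singularity hypothesis, the tangent flow at $X_0$ is a round shrinking cylinder; hence any limit flow at $X_0$ is an ancient flow whose Gaussian entropy is at most that of the cylinder and which is noncollapsed near the neck. The task is then to classify all ancient, noncollapsed, asymptotically cylindrical flows, which I would split into a coarse and a fine step. In the coarse step one shows that, backwards in time, the suitably rescaled flow converges to a fixed round cylinder $S^{n-1}\times\R$. In the fine step one linearizes the renormalized flow about this cylinder and analyzes the drift (Ornstein--Uhlenbeck--type) operator $L=\Delta-\tfrac12\langle x,\nabla\,\cdot\,\rangle+1$ on it: along the axis its positive eigenmodes are the trivial translation and dilation modes, its zero (neutral) eigenspace is spanned by the quadratic axial mode, and the remaining modes are stable. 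Determining which mode governs the approach to the cylinder and running a rigidity argument yields the ancient oval in the compact neutral-mode regime and the bowl in the noncompact regime, with the degenerate cases reducing to the cylinder, sphere, or plane.

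To pass from the classification to the nesting statement I would argue by contradiction in the inward case, the outward case being symmetric after replacing $(\mathcal{M},\mathcal{K})$ by $(\mathcal{M}',\mathcal{K}')$. Suppose that for every $\varepsilon>0$ there are times $t_0-\varepsilon<t_1<t_2<t_0+\varepsilon$ and a point witnessing $K_{t_2}\cap B(x_0,\varepsilon)\not\subseteq K_{t_1}\setminus M_{t_1}$. Selecting a first violation and parabolically rescaling around it, one extracts an ancient limit flow along the degenerating sequence. By the classification this limit must be one of the four models, and each of them sweeps out its enclosed region strictly inward near its singular set; at a first failure of nesting the corresponding time-slices would be tangent, so the strong maximum principle would force them to coincide and the limit to be static, contradicting that the blow-up takes place at a genuine neck singularity. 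This yields the required $\varepsilon$.

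The hardest part, by a wide margin, is the fine step of the classification: the neutral-mode rigidity that upgrades the approximate rotational symmetry of the renormalized flow to exact rotational symmetry and thereby isolates the ancient oval, together with the exclusion of any stable-mode dominated asymptotics. In arbitrary dimension there is the further obstacle, absent when $n=2$, that one does not have global two-convexity, so the local noncollapsing underlying both the blow-up extraction and the asymptotically cylindrical classification must itself be established near the neck; ensuring that the rescaled flows do not collapse and that their limits genuinely lie in the ancient asymptotically cylindrical class is the main technical difficulty of the argument.
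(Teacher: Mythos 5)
This theorem is not proved in the paper at all: it is quoted verbatim from \cite[Thm. 1.17]{CHHW} (``which we now quote for convenience of the reader''), so there is no internal proof to compare your sketch against. What you have written is, in effect, a compressed outline of the main content of \cite{CHHW} itself. At that level your architecture does match the real one: the classification of ancient asymptotically cylindrical flows proceeds via backward convergence of the renormalized flow to a fixed round cylinder, a Merle--Zaag-type dichotomy between the neutral and unstable modes of the drift operator on the cylinder, an ADS-style neutral-mode rigidity argument \cite{ADS} producing the oval in the compact regime, and a BC-style argument \cite{BC} producing the bowl in the noncompact regime; and you correctly identify the higher-dimensional noncollapsing/cylindricality issue as the main new technical obstacle. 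But a one-paragraph gesture at this cannot count as a proof of what is the core of a long separate paper.

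Beyond incompleteness, your passage from the classification to the nesting statement contains a genuine gap. The ``first violation, tangency, strong maximum principle'' argument does not work as stated: the flows here are weak (level set/Brakke) flows, the time slices near the singularity need not be smooth, and at a hypothetical first failure of nesting there is no smooth tangency to which a strong maximum principle applies; moreover the limit you extract at such a point is only constrained by your classification if the rescaling factors diverge, which your setup does not guarantee. The actual mechanism is different: by definition, any blow-up along $X_i\to X_0$ with $\lambda_i\to\infty$ \emph{is} a limit flow at $X_0$, hence one of the four models, each of which has strictly positive mean curvature; a contradiction/compactness argument then yields uniformly positive mean curvature on the regular part in a full space-time neighborhood of $X_0$ (points of small regularity scale are handled by the blow-up, the rest by smoothness), and the nesting $K_{t_2}\cap B(x_0,\varepsilon)\subseteq K_{t_1}\setminus M_{t_1}$ follows by integrating the inward motion. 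Relatedly, your claim that limit flows at $X_0$ lie in the asymptotically cylindrical class ``since entropy is at most that of the cylinder'' is not the right reason for $n\geq 3$: there is no low-entropy classification of shrinkers in higher dimensions, and the membership is instead established through the rigidity case of Huisken's monotonicity formula \cite{Huisken_monotonicity} together with the uniqueness of cylindrical tangent flows, using the multiplicity-one smooth convergence built into the definition of a neck singularity.
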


\begin{proof}[{Proof of Theorem \ref{thm_equivalence_general}}]
Applying \cite[Thm. B3]{HW}, we get a unit-regular integral Brakke flow $\mathcal{M}$ in a neighborhood of $X_0$, whose support is equal to the outer respectively inner flow, with multiplicity one.

Arguing as in Proposition \ref{prop1}, if there are spherical singularities at $X_i$ with $X_i\rightarrow X_0$, then blowing up by $S(X_i)$ and passing to a limit we would obtain a limit flow $\mathcal{M}^\infty$ at $X_0$, that is not selfsimilar. By the classification of potential limit flows from Theorem \ref{mean_convex_thm} (which generalizes the classification results from \cite{ADS,BC} that have been used in the proof of Proposition \ref{prop3}), $\mathcal{M}^\infty$ has to be an ancient oval.

Conversely, as the argument in Proposition \ref{prop2} only uses the appearance of the ancient ovals as a limit flow, it applies in this context as well. Thus, if the ancient ovals occur as limit flow at $X_0$, there must be a sequence of spherical singularities converging to $X_0$.  
\end{proof}

\bigskip

\bibliographystyle{amsplain}

\begin{thebibliography}{999}
\bibitem{AAG} S. Altschuler, S. Angenent, Y. Giga, \textit{Mean curvature flow through singularities for surfaces of rotation}, J. Geom. Anal. 5(1995):293-358.
\bibitem{AV} S. Angenent, J. Velazquez, \textit{Degenerate neckpinches in mean curvature flow}, J. Reine Angew. Math. 482(1997):15-66.
\bibitem{ADS} S. Angenent, P. Daskalopoulos, N. Sesum, \textit{Uniqueness of two-convex closed ancient solutions to the mean curvature flow}, arXiv:1804.07230.
\bibitem{BC} S. Brendle, K. Choi, \textit{Uniqueness of convex ancient solutions to mean curvature flow in $\mathbb{R}^3$}, Invent. Math. 217(2019):35-76.
\bibitem{CHN} J. Cheeger, R. Haslhofer, A. Naber, \textit{Quantitative stratification and the regularity of mean curvature flow}, Geom. Funct. Anal. 23(2013):828-847.
\bibitem{CHH} K. Choi, R. Haslhofer, O. Hershkovits, \textit{Ancient low entropy flows, mean convex neighborhoods, and uniqueness}, arXiv:1810.08467.
\bibitem{CHHW} K. Choi, R. Haslhofer, O. Hershkovits, B. White, \textit{Ancient asymptotically cylindrical flows and applications}, arXiv:1910.00639.
\bibitem{CM_arrival} T. Colding, W. Minicozzi, \textit{Differentiability of the arrival time}, Comm. Pure Appl. Math. 69(2016):2349-2363.
\bibitem{Hamilton_Harnack} R. Hamilton \textit{Harnack estimate for the mean curvature flow}, J. Differential Geom. 41(1995):215-226.
\bibitem{HH_ovals} R. Haslhofer,  O. Hershkovits, \textit{Ancient solutions of the mean curvature flow}, Comm. Anal. Geom. 24(2016):593-604.
\bibitem{HK_meanconvex} R. Haslhofer, B. Kleiner, \textit{Mean curvature flow of mean convex hypersurfaces}, Comm. Pure. Appl. Math. 70(2018):511-546.
\bibitem{HW} O. Hershkovits, B. White \textit{Non-fattening of mean curvature flow at singularities of mean convex type}, Comm. Pure Appl. Math. (to appear), 2018.
\bibitem{Huisken_monotonicity} G. Huisken, \textit{Asymptotic behavior for singularities of the mean curvature flow}, J. Differential Geom. 31(1990):285-299.
\bibitem{Ilmanen_problems} T. Ilmanen, \textit{Problems in mean curvature flow}, https://people.math.ethz.ch/ ilmanen/classes/eil03/problems03.ps, 2003.
\bibitem{Ilmanen_monograph} T. Ilmanen, \textit{Elliptic regularization and partial regularity for motion by mean curvature}, Mem. Amer. Math. Soc. 108(520):x+90, 1994.
\bibitem{KL_singular} B. Kleiner, J. Lott, \textit{Singular Ricci flows I}, Acta Math. 219(2017):65-134.
\bibitem{Miura} T.Miura, \textit{An example of a mean-convex mean curvature flow developing infinitely many singular epochs}, J. Geom. Anal.  26(2016):3019-3026.
\bibitem{White_stratification} B. White, \textit{Stratification of minimal surfaces, mean curvature flows, and harmonic maps}, J. Reine Angew. Math. 488(1997):1-35.
\bibitem{White_size} B. White, \textit{The size of the singular set in mean curvature flow of mean-convex sets}, J. Amer. Math. Soc. 13(2000):665-695.
\bibitem{White_nature} B. White, \textit{The nature of singularities in mean curvature flow of mean-convex sets}, J. Amer. Math. Soc. 16(2003):123-138.
\bibitem{White_ICM} B. White, \textit{Evolution of curves and surfaces by mean curvature}, Proceedings of the ICM, Beijing, 2002.
\bibitem{Wang} X. Wang, \textit{Convex solutions to the mean curvature flow}, Ann. of Math. 173(2011):1185-1239.
\end{thebibliography}

\vspace{10mm}

{\sc Beomjun Choi, Department of Mathematics, University of Toronto,  40 St George Street, Toronto, ON M5S 2E4, Canada}\\

{\sc Robert Haslhofer, Department of Mathematics, University of Toronto,  40 St George Street, Toronto, ON M5S 2E4, Canada}\\

{\sc Or Hershkovits, Department of Mathematics, Stanford University, 450 Serra Mall, Stanford, CA 94305, USA}\\

\end{document}